\newtheorem{theorem}{Theorem}
\newtheorem{remark}[theorem]{Remark}
\newenvironment{proof}[1][Proof]{\noindent\textbf{#1.} }{\ \rule{0.5em}{0.5em}}
\begin{document}

\title{A\ note on the order derivatives of Kelvin functions}
\author{J. L. Gonz\'{a}lez-Santander \and C/ Ovidi Montllor i Mengual 7,
pta. 9. \and 46017, Valencia, Spain.}
\date{}
\maketitle

\begin{abstract}
We calculate the derivative of the $\mathrm{ber}_{\nu }$, $\,\mathrm{bei}%
_{\nu }$, $\mathrm{ker}_{\nu }$, and $\,\mathrm{kei}_{\nu }$ functions with
respect to the order $\nu $ in closed-form for $\nu \in \mathbb{R}$. Unlike
the expressions found in the literature for order derivatives of the $%
\mathrm{ber}_{\nu }$ and $\,\mathrm{bei}_{\nu }$ functions, we provide much
more simple expressions that are also applicable for negative integral
order. The expressions for the order derivatives of the $\mathrm{ker}_{\nu }$
and $\,\mathrm{kei}_{\nu }$ functions seem to be novel. Also, as a
by-product, we calculate some new integrals involving the $\mathrm{ber}_{\nu
}$ and $\,\mathrm{bei}_{\nu }$ functions in closed-form. Finally, we include
a simple derivation of some integral representations of the $\mathrm{ber}$
and $\,\mathrm{bei}$ functions.
\end{abstract}

\textbf{Keywords}:\ Kelvin functions, Bessel functions, generalized
hypergeometric function, Meijer-$G$ function

\textbf{Mathematics Subject Classification}:\ 33C10, 33C20, 33E20

\section{Introduction}

In an investigation of the so-called \textit{skin effect} in a wire carrying
an alternating current, Lord Kelvin \cite{LordKelvin} introduced the $%
\mathrm{ber}$ and $\mathrm{bei}$ functions as the real and imaginary parts
respectively of the regular solution of the differential equation%
\begin{equation*}
xy^{\prime \prime }+y^{\prime }-ixy=0.
\end{equation*}

Lord Kelvin wrote this solution as%
\begin{equation*}
I_{0}\left( \sqrt{i}x\right) =\mathrm{ber}\left( x\right) +i\,\mathrm{bei}%
\left( x\right) .
\end{equation*}%
Since the main applications of the $\mathrm{ber}$ and $\mathrm{bei}$
functions comes from the analysis of the current distribution in circular
conductors \cite{Russel} where the argument $x$ represents the radial
coordinate, we will consider hereafter $x\geq 0$. For other applications see 
\cite[Chap. VIII-IX]{McLachlan}.

Whitehead \cite{Whitehead}\ generalized the functions introduced by Lord
Kelvin, denoting the real and imaginary parts of the solution of the
differential equation%
\begin{equation*}
x^{2}y^{\prime \prime }+xy^{\prime }-\left( \nu ^{2}+ix^{2}\right) y=0,
\end{equation*}%
as (see also \cite[Eqn. 10.61.1\&2]{DLMF})%
\begin{eqnarray}
\mathrm{ber}_{\nu }\left( x\right) +i\,\mathrm{bei}_{\nu }\left( x\right)
&=&e^{i\pi \nu }J_{\nu }\left( e^{-i\pi /4}x\right) ,  \label{ber_nu_def} \\
\mathrm{ker}_{\nu }\left( x\right) +i\,\mathrm{kei}_{\nu }\left( x\right)
&=&e^{-i\pi \nu /2}K_{\nu }\left( e^{i\pi /4}x\right) ,  \label{bei_nu_def}
\end{eqnarray}%
where $\nu \in 
\mathbb{R}
$. In the literature \cite[Sect 10.61]{DLMF} we can find several properties
of the Kelvin functions $\mathrm{ber}_{\nu }$, $\,\mathrm{bei}_{\nu }$, $%
\mathrm{ker}_{\nu }$, and $\,\mathrm{kei}_{\nu }$. However, regarding the
integral representation of these functions, we have only found the results
given in \cite{Apelblat}, which read as%
\begin{eqnarray}
&&\mathrm{ber}_{\nu }\left( x\sqrt{2}\right)  \label{ber_nu_Int} \\
&=&\frac{1}{\pi }\int_{0}^{\pi }\left[ \cos \pi \nu \cos \left( x\sin t-\nu
t\right) \cosh \left( x\sin t\right) \right.  \notag \\
&&\qquad -\left. \sin \pi \nu \sin \left( x\sin t-\nu t\right) \sinh \left(
x\sin t\right) \right] dt  \notag \\
&&-\frac{\sin \pi \nu }{\pi }\int_{0}^{\infty }\exp \left( -\nu t-x\sinh
t\right) \cos \left( x\sin t+\pi \nu \right) dt,  \notag
\end{eqnarray}%
and%
\begin{eqnarray}
&&\mathrm{bei}_{\nu }\left( x\sqrt{2}\right)  \label{bei_nu_Int} \\
&=&\frac{1}{\pi }\int_{0}^{\pi }\left[ \cos \pi \nu \sin \left( x\sin t-\nu
t\right) \sinh \left( x\sin t\right) \right.  \notag \\
&&\qquad +\left. \sin \pi \nu \cos \left( x\sin t-\nu t\right) \cosh \left(
x\sin t\right) \right] dt  \notag \\
&&-\frac{\sin \pi \nu }{\pi }\int_{0}^{\infty }\exp \left( -\nu t-x\sinh
t\right) \sin \left( x\sin t+\pi \nu \right) dt,  \notag
\end{eqnarray}

In the Appendix, we obtain an alternative derivation quite simple for the
integral representation of the $\mathrm{ber}$ and $\,\mathrm{bei}$ functions
based on the idea given in \cite{SchulzDubois}. From the integral
representations (\ref{ber_nu_Int})\ and (\ref{bei_nu_Int}),\ the derivatives
with respect to the order are obtained in integral form as follows \cite%
{Apelblat}:\ 
\begin{eqnarray}
&&\frac{\partial \,\mathrm{ber}_{\nu }\left( x\right) }{\partial \nu }
\label{Dber_Int} \\
&=&\log \left( \frac{x}{2}\right) \mathrm{ber}_{\nu }\left( x\right) -\frac{%
3\pi }{4}\mathrm{bei}_{\nu }\left( x\right)  \notag \\
&&-\frac{x}{2\sqrt{2}}\int_{0}^{1}u^{\left( \nu -1\right) /2}\left[ \gamma
+\log \left( 1-u\right) \right] \left[ \mathrm{ber}_{\nu -1}\left( x\sqrt{u}%
\right) +\mathrm{bei}_{\nu }\left( x\sqrt{u}\right) \right] du,  \notag
\end{eqnarray}%
and%
\begin{eqnarray}
&&\frac{\partial \,\mathrm{bei}_{\nu }\left( x\right) }{\partial \nu }
\label{Dbei_Int} \\
&=&\log \left( \frac{x}{2}\right) \mathrm{bei}_{\nu }\left( x\right) +\frac{%
3\pi }{4}\,\mathrm{ber}_{\nu }\left( x\right)  \notag \\
&&+\frac{x}{2\sqrt{2}}\int_{0}^{1}u^{\left( \nu -1\right) /2}\left[ \gamma
+\log \left( 1-u\right) \right] \left[ \mathrm{ber}_{\nu -1}\left( x\sqrt{u}%
\right) -\mathrm{bei}_{\nu }\left( x\sqrt{u}\right) \right] du.  \notag
\end{eqnarray}

Recently in \cite{BrychovNew},\ we find closed-form expressions for the
order derivatives of the $\mathrm{ber}_{\nu }$ and $\,\mathrm{bei}_{\nu }$
functions, which read as 
\begin{eqnarray}
&&\frac{\partial \,\mathrm{ber}_{\nu }\left( x\right) }{\partial \nu }
\label{Dber_Brychov} \\
&=&\left[ \log \left( \frac{x}{2}\right) -\psi \left( \nu \right) -\frac{1}{%
2\nu }\right] \mathrm{ber}_{\nu }\left( x\right) -\frac{3\pi }{4}\,\mathrm{%
bei}_{\nu }\left( x\right)  \notag \\
&&+\frac{\pi \csc \pi \nu }{2\,\Gamma ^{2}\left( \nu +1\right) }\left( \frac{%
x}{2}\right) ^{2\nu }\left[ \sin \left( \frac{3\pi \nu }{2}\right) \mathrm{%
bei}_{-\nu }\left( x\right) -\cos \left( \frac{3\pi \nu }{2}\right) \mathrm{%
ber}_{-\nu }\left( x\right) \right] c\left( \nu ,x,0\right)  \notag \\
&&+\frac{\pi \nu \csc \pi \nu }{\Gamma ^{2}\left( \nu +2\right) }\left( 
\frac{x}{2}\right) ^{2\nu +2}\left[ \cos \left( \frac{3\pi \nu }{2}\right) 
\mathrm{bei}_{-\nu }\left( x\right) +\sin \left( \frac{3\pi \nu }{2}\right) 
\mathrm{ber}_{-\nu }\left( x\right) \right] c\left( \nu ,x,1\right)  \notag
\\
&&-\frac{x^{2}}{4\left( 1-\nu ^{2}\right) }\left[ \mathrm{bei}_{\nu }\left(
x\right) d\left( \nu ,x,0\right) +\frac{3x^{2}}{8\left( 4-\nu ^{2}\right) }\,%
\mathrm{ber}_{\nu }\left( x\right) d\left( \nu ,x,1\right) \right] ,  \notag
\end{eqnarray}%
and%
\begin{eqnarray}
&&\frac{\partial \,\mathrm{bei}_{\nu }\left( x\right) }{\partial \nu }
\label{Dbei_Brychov} \\
&=&\left[ \log \left( \frac{x}{2}\right) -\psi \left( \nu \right) -\frac{1}{%
2\nu }\right] \mathrm{bei}_{\nu }\left( x\right) +\frac{3\pi }{4}\,\mathrm{%
ber}_{\nu }\left( x\right)  \notag \\
&&-\frac{\pi \csc \pi \nu }{2\,\Gamma ^{2}\left( \nu +1\right) }\left( \frac{%
x}{2}\right) ^{2\nu }\left[ \cos \left( \frac{3\pi \nu }{2}\right) \mathrm{%
bei}_{-\nu }\left( x\right) +\sin \left( \frac{3\pi \nu }{2}\right) \mathrm{%
ber}_{-\nu }\left( x\right) \right] c\left( \nu ,x,0\right)  \notag \\
&&-\frac{\pi \nu \csc \pi \nu }{\Gamma ^{2}\left( \nu +2\right) }\left( 
\frac{x}{2}\right) ^{2\nu +2}\left[ \cos \left( \frac{3\pi \nu }{2}\right) 
\mathrm{ber}_{-\nu }\left( x\right) -\sin \left( \frac{3\pi \nu }{2}\right) 
\mathrm{bei}_{-\nu }\left( x\right) \right] c\left( \nu ,x,1\right)  \notag
\\
&&+\frac{x^{2}}{4\left( 1-\nu ^{2}\right) }\left[ \mathrm{ber}_{\nu }\left(
x\right) d\left( \nu ,x,0\right) -\frac{3x^{2}}{8\left( 4-\nu ^{2}\right) }\,%
\mathrm{bei}_{\nu }\left( x\right) d\left( \nu ,x,1\right) \right] ,  \notag
\end{eqnarray}%
and where the following functions have been defined:%
\begin{eqnarray*}
&&c\left( \nu ,x,a\right) \\
&=&\,_{3}F_{6}\left( \left. 
\begin{array}{c}
\frac{2\nu +a+1}{4},\frac{2\nu +3}{4},\frac{2\nu +5a}{4} \\ 
a+\frac{1}{2},\frac{\nu +a+1}{2},\frac{\nu +a}{2}+1,\frac{\nu +a}{2}+1,\nu +%
\frac{a+1}{2},\nu +1+\frac{a}{2}%
\end{array}%
\right\vert -\frac{x^{4}}{16}\right) ,
\end{eqnarray*}%
and%
\begin{eqnarray*}
&&d\left( \nu ,x,a\right) \\
&=&\,_{4}F_{7}\left( \left. 
\begin{array}{c}
\frac{a+1}{2},\frac{a+1}{2},\frac{2a+3}{4},\frac{2a+5}{4} \\ 
a+\frac{1}{2},\frac{a+3}{2},\frac{a+3}{2},\frac{\nu +a}{2}+1,\frac{\nu +a+3}{%
2},\frac{a-\nu }{2}+1,\frac{a-\nu +3}{2}%
\end{array}%
\right\vert -\frac{x^{4}}{16}\right) .
\end{eqnarray*}

It is worth noting that (\ref{Dber_Brychov})\ and (\ref{Dbei_Brychov})\
cannot be applicable for integral order, i.e. $\nu =n\in 
\mathbb{Z}
$. Nevertheless, for $n\geq 0$, we find in the literature the following
expressions \cite[Eqn. 1.17.2(1)\&(2)]{BrychovBook}:%
\begin{eqnarray}
&&\left. \frac{\partial \,\mathrm{ber}_{\nu }\left( x\right) }{\partial \nu }%
\right\vert _{\nu =n}=-\frac{\pi }{2}\mathrm{bei}_{n}\left( x\right) -%
\mathrm{ker}_{n}\left( x\right)  \label{DBern_Brychov} \\
&&+\frac{n!}{2}\sum_{k=0}^{n-1}\frac{\left( z/2\right) ^{k-n}}{k!\left(
n-k\right) }\left[ \cos \left( \frac{5\left( k-n\right) \pi }{4}\right) 
\mathrm{ber}_{k}\left( x\right) +\sin \left( \frac{5\left( k-n\right) \pi }{4%
}\right) \mathrm{bei}_{k}\left( x\right) \right] ,  \notag
\end{eqnarray}%
and%
\begin{eqnarray}
&&\left. \frac{\partial \,\mathrm{bei}_{\nu }\left( x\right) }{\partial \nu }%
\right\vert _{\nu =n}=\frac{\pi }{2}\mathrm{ber}_{n}\left( x\right) -\mathrm{%
kei}_{n}\left( x\right)  \label{DBein_Brychov} \\
&&+\frac{n!}{2}\sum_{k=0}^{n-1}\frac{\left( z/2\right) ^{k-n}}{k!\left(
n-k\right) }\left[ \cos \left( \frac{5\left( k-n\right) \pi }{4}\right) 
\mathrm{bei}_{k}\left( x\right) -\sin \left( \frac{5\left( k-n\right) \pi }{4%
}\right) \mathrm{ber}_{k}\left( x\right) \right] .  \notag
\end{eqnarray}

Also, for the $\mathrm{ker}_{\nu }$ and $\,\mathrm{kei}_{\nu }$ functions,
we have for $n\geq 0$ \cite[Eqn. 1.17.2(5)\&(6)]{BrychovBook}: 
\begin{eqnarray}
&&\left. \frac{\partial \,\mathrm{ker}_{\nu }\left( x\right) }{\partial \nu }%
\right\vert _{\nu =n}=\frac{\pi }{2}\mathrm{kei}_{n}\left( x\right)
\label{DKern_Brychov} \\
&&+\frac{n!}{2}\sum_{k=0}^{n-1}\frac{\left( z/2\right) ^{k-n}}{k!\left(
n-k\right) }\left[ \cos \left( \frac{3\left( k-n\right) \pi }{4}\right) 
\mathrm{ker}_{k}\left( x\right) -\sin \left( \frac{3\left( k-n\right) \pi }{4%
}\right) \mathrm{kei}_{k}\left( x\right) \right] ,  \notag
\end{eqnarray}%
and%
\begin{eqnarray}
&&\left. \frac{\partial \,\mathrm{kei}_{\nu }\left( x\right) }{\partial \nu }%
\right\vert _{\nu =n}=-\frac{\pi }{2}\mathrm{ker}_{n}\left( x\right)
\label{DKein_Brychov} \\
&&+\frac{n!}{2}\sum_{k=0}^{n-1}\frac{\left( z/2\right) ^{k-n}}{k!\left(
n-k\right) }\left[ \sin \left( \frac{3\left( k-n\right) \pi }{4}\right) 
\mathrm{ker}_{k}\left( x\right) +\cos \left( \frac{3\left( k-n\right) \pi }{4%
}\right) \mathrm{kei}_{k}\left( x\right) \right] .  \notag
\end{eqnarray}

Nonetheless, integral representations and explicit closed-form expressions
for the order derivatives of the $\mathrm{ker}_{\nu }$ and $\,\mathrm{kei}%
_{\nu }$ functions for non-integral order are apparently absent in the
literature. Therefore, the main scope of this paper is to obtain alternative
closed-form expressions for the order derivatives of the $\mathrm{ber}_{\nu
} $ and $\,\mathrm{bei}_{\nu }$ functions, and new ones for the order
derivatives of the $\mathrm{ker}_{\nu }$ and $\,\mathrm{kei}_{\nu }$
functions for arbitrary real order, $\nu \in 
\mathbb{R}
$. From these results, and taking into account the integral representations
given in (\ref{Dber_Int})\ and (\ref{Dbei_Int}), we will obtain the
calculation of some integrals involving the Kelvin functions $\mathrm{ber}%
_{\nu }$ and $\,\mathrm{bei}_{\nu }$.

This paper is organized as follows. Section \ref{Section: Order derivatives}%
\ is devoted to the calculation of the derivatives of the Kelvin functions
with respect to the order. First, we calculate the corresponding order
derivatives for non-negative order, and then, using reflection formulas, we
calculate the case of negative order. In Section \ref{Section: Integrals},
we calculate some integrals involving the $\mathrm{ber}_{\nu }$ and $\,%
\mathrm{bei}_{\nu }$ functions, which do not seem to be reported in the
literature. Finally, we collect the conclusions in Section \ref{Section:
Conclusions}. Also, we provide an alternative derivation for the integral
representation of the $\mathrm{ber}$ and $\,\mathrm{bei}$ functions in the
Appendix.

\section{Order derivatives of Kelvin functions\label{Section: Order
derivatives}}

According to the definitions given in (\ref{ber_nu_def})\ and (\ref%
{bei_nu_def}), $\forall x,\nu \geq 0$ we have that 
\begin{eqnarray}
\mathrm{ber}_{\nu }\left( x\right) &=&\mathrm{Re}\left[ e^{i\pi \nu }J_{\nu
}\left( e^{-i\pi /4}x\right) \right] ,  \label{ber_nu_Re} \\
\mathrm{bei}_{\nu }\left( x\right) &=&\mathrm{Im}\left[ e^{i\pi \nu }J_{\nu
}\left( e^{-i\pi /4}x\right) \right] ,  \label{bei_nu_Im}
\end{eqnarray}%
and%
\begin{eqnarray}
\mathrm{ker}_{\nu }\left( x\right) &=&\mathrm{Re}\left[ e^{-i\pi \nu
/2}K_{\nu }\left( e^{i\pi /4}x\right) \right] ,  \label{ker_nu_Re} \\
\mathrm{kei}_{\nu }\left( x\right) &=&\mathrm{Im}\left[ e^{-i\pi \nu
/2}K_{\nu }\left( e^{i\pi /4}x\right) \right] .  \label{kei_nu_Im}
\end{eqnarray}

We can rewrite (\ref{ber_nu_Re})-(\ref{kei_nu_Im}), using the fact that 
\begin{eqnarray}
\mathrm{Re}\,z &=&\frac{z+\bar{z}}{2},  \label{Re_z_def} \\
\mathrm{Im}\,z &=&\frac{z-\bar{z}}{2i},  \label{Im_z_def}
\end{eqnarray}%
and the properties $\forall \nu \in 
\mathbb{R}
$ \cite[Eqn. 10.11.8\&10.34.7]{DLMF}%
\begin{eqnarray*}
\overline{J_{\nu }\left( z\right) } &=&J_{\nu }\left( \bar{z}\right) , \\
\overline{K_{\nu }\left( z\right) } &=&K_{\nu }\left( \bar{z}\right) ,
\end{eqnarray*}%
thereby%
\begin{eqnarray*}
\mathrm{ber}_{\nu }\left( x\right) &=&\frac{1}{2}\left[ e^{i\pi \nu }J_{\nu
}\left( e^{-i\pi /4}x\right) +e^{-i\pi \nu }J_{\nu }\left( e^{i\pi
/4}x\right) \right] , \\
\mathrm{bei}_{\nu }\left( x\right) &=&\frac{i}{2}\left[ e^{-i\pi \nu }J_{\nu
}\left( e^{i\pi /4}x\right) -e^{i\pi \nu }J_{\nu }\left( e^{-i\pi
/4}x\right) \right] ,
\end{eqnarray*}%
and%
\begin{eqnarray*}
\mathrm{ker}_{\nu }\left( x\right) &=&\frac{1}{2}\left[ e^{i\pi \nu
/2}K_{\nu }\left( e^{i\pi /4}x\right) +e^{i\pi \nu /2}K_{\nu }\left(
e^{-i\pi /4}x\right) \right] , \\
\mathrm{kei}_{\nu }\left( x\right) &=&\frac{i}{2}\left[ e^{i\pi \nu
/2}K_{\nu }\left( e^{-i\pi /4}x\right) -e^{-i\pi \nu /2}K_{\nu }\left(
e^{i\pi /4}x\right) \right] .
\end{eqnarray*}

Thus, taking into account (\ref{ber_nu_Re})-(\ref{Im_z_def}), the derivative
of the Kelvin functions with respect to the order can be expressed as
follows:

\begin{theorem}
$\forall x,\nu \geq 0$, the order derivatives of the Kevin functions are 
\begin{eqnarray}
\frac{\partial \,\mathrm{ber}_{\nu }\left( x\right) }{\partial \nu } &=&%
\mathrm{Re}\left[ e^{i\pi \nu }\frac{\partial J_{\nu }}{\partial \nu }\left(
e^{-i\pi /4}x\right) \right] -\pi \,\mathrm{bei}_{\nu }\left( x\right) ,
\label{D_ber_nu_JL} \\
\frac{\partial \,\mathrm{bei}_{\nu }\left( x\right) }{\partial \nu } &=&%
\mathrm{Im}\left[ e^{i\pi \nu }\frac{\partial J_{\nu }}{\partial \nu }\left(
e^{-i\pi /4}x\right) \right] +\pi \,\mathrm{ber}_{\nu }\left( x\right) ,
\label{D_bei_nu_JL}
\end{eqnarray}%
and%
\begin{eqnarray}
\frac{\partial \,\mathrm{ker}_{\nu }\left( x\right) }{\partial \nu } &=&%
\mathrm{Re}\left[ e^{-i\pi \nu /2}\frac{\partial K_{\nu }}{\partial \nu }%
\left( e^{i\pi /4}x\right) \right] +\frac{\pi }{2}\,\mathrm{kei}_{\nu
}\left( x\right) ,  \label{D_ker_nu_JL} \\
\frac{\partial \,\mathrm{kei}_{\nu }\left( x\right) }{\partial \nu } &=&%
\mathrm{Im}\left[ e^{-i\pi \nu /2}\frac{\partial K_{\nu }}{\partial \nu }%
\left( e^{i\pi /4}x\right) \right] -\frac{\pi }{2}\,\mathrm{ker}_{\nu
}\left( x\right) .  \label{D_kei_nu_JL}
\end{eqnarray}
\end{theorem}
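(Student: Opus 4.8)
The plan is to differentiate the representations (\ref{ber_nu_Re})--(\ref{kei_nu_Im}) directly, carrying the derivative $\partial/\partial\nu$ inside the $\mathrm{Re}$ and $\mathrm{Im}$ operators. This interchange is legitimate: by (\ref{Re_z_def}) and (\ref{Im_z_def}) the maps $\mathrm{Re}$ and $\mathrm{Im}$ are $\mathbb{R}$-linear, and for $\nu\in\mathbb{R}$ the (principal-branch) functions $J_{\nu}(z)$ and $K_{\nu}(z)$ are analytic in $\nu$, so $\partial J_{\nu}/\partial\nu$ and $\partial K_{\nu}/\partial\nu$ exist and are continuous. Throughout, the symbol $\frac{\partial J_{\nu}}{\partial\nu}\left(e^{-i\pi/4}x\right)$ is understood as the $\nu$-partial derivative of $z\mapsto J_{\nu}(z)$ evaluated at $z=e^{-i\pi/4}x$, and likewise for $K_{\nu}$ at $z=e^{i\pi/4}x$ with $x\geq 0$.

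First I would treat $\mathrm{ber}_{\nu}$ and $\mathrm{bei}_{\nu}$ together by applying the product rule to $e^{i\pi\nu}J_{\nu}\left(e^{-i\pi/4}x\right)$:
\[
\frac{\partial}{\partial\nu}\left[e^{i\pi\nu}J_{\nu}\left(e^{-i\pi/4}x\right)\right]
=i\pi\,e^{i\pi\nu}J_{\nu}\left(e^{-i\pi/4}x\right)
+e^{i\pi\nu}\frac{\partial J_{\nu}}{\partial\nu}\left(e^{-i\pi/4}x\right).
\]
Taking real and imaginary parts and invoking the elementary identities $\mathrm{Re}(iw)=-\mathrm{Im}\,w$ and $\mathrm{Im}(iw)=\mathrm{Re}\,w$, the first term on the right contributes $-\pi\,\mathrm{Im}\left[e^{i\pi\nu}J_{\nu}\left(e^{-i\pi/4}x\right)\right]=-\pi\,\mathrm{bei}_{\nu}(x)$ to $\partial\,\mathrm{ber}_{\nu}/\partial\nu$, and $+\pi\,\mathrm{Re}\left[e^{i\pi\nu}J_{\nu}\left(e^{-i\pi/4}x\right)\right]=+\pi\,\mathrm{ber}_{\nu}(x)$ to $\partial\,\mathrm{bei}_{\nu}/\partial\nu$, using (\ref{ber_nu_Re}) and (\ref{bei_nu_Im}). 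This gives (\ref{D_ber_nu_JL}) and (\ref{D_bei_nu_JL}).

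Next, the same computation applies verbatim to $\mathrm{ker}_{\nu}$ and $\mathrm{kei}_{\nu}$ with the prefactor $e^{-i\pi\nu/2}$ in place of $e^{i\pi\nu}$; its $\nu$-derivative is $-\tfrac{i\pi}{2}e^{-i\pi\nu/2}$, and the identities $\mathrm{Re}(-iw)=\mathrm{Im}\,w$, $\mathrm{Im}(-iw)=-\mathrm{Re}\,w$ convert the prefactor term into $+\tfrac{\pi}{2}\,\mathrm{kei}_{\nu}(x)$ for $\partial\,\mathrm{ker}_{\nu}/\partial\nu$ and $-\tfrac{\pi}{2}\,\mathrm{ker}_{\nu}(x)$ for $\partial\,\mathrm{kei}_{\nu}/\partial\nu$, by (\ref{ker_nu_Re}) and (\ref{kei_nu_Im}). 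This yields (\ref{D_ker_nu_JL}) and (\ref{D_kei_nu_JL}), completing the proof.

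There is no substantive obstacle: the statement is essentially bookkeeping with the product rule and the behaviour of $\mathrm{Re}$ and $\mathrm{Im}$ under multiplication by $i$. The only point that deserves an explicit line of justification is precisely the one noted above, namely the legitimacy of moving $\partial/\partial\nu$ through $\mathrm{Re}$ and $\mathrm{Im}$ and the well-definedness of the order derivatives of $J_{\nu}$ and $K_{\nu}$ at the complex arguments $e^{\mp i\pi/4}x$; both are immediate from the analyticity of $J_{\nu}(z)$ and $K_{\nu}(z)$ in $\nu$ and the $\mathbb{R}$-linearity of the real- and imaginary-part maps, so the argument should be short.
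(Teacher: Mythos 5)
Your proof is correct and follows essentially the same route as the paper, which likewise obtains the theorem by differentiating the representations (\ref{ber_nu_Re})--(\ref{kei_nu_Im}) with the product rule and using the $\mathbb{R}$-linearity of $\mathrm{Re}$ and $\mathrm{Im}$ (the paper's rewriting via $\mathrm{Re}\,z=(z+\bar z)/2$ and $\overline{J_\nu(z)}=J_\nu(\bar z)$ is just an explicit justification of the interchange you invoke). All four sign computations check out.
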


\bigskip

For negative order $\nu $, we can use the following reflection formulas \cite%
[Eqn. 10.61.6\&7]{DLMF}:%
\begin{eqnarray}
\mathrm{ber}_{-\nu }\left( x\right) &=&\cos \pi \nu \,\mathrm{ber}_{\nu
}\left( x\right) +\sin \pi \nu \,\mathrm{bei}_{\nu }\left( x\right) +\frac{2%
}{\pi }\sin \pi \nu \,\mathrm{ker}_{\nu }\left( x\right) ,
\label{ber_-nu_def} \\
\mathrm{bei}_{-\nu }\left( x\right) &=&-\sin \pi \nu \,\mathrm{ber}_{\nu
}\left( x\right) +\cos \pi \nu \,\mathrm{bei}_{\nu }\left( x\right) +\frac{2%
}{\pi }\sin \pi \nu \,\mathrm{kei}_{\nu }\left( x\right) ,
\label{bei_-nu_def}
\end{eqnarray}%
and%
\begin{eqnarray}
\mathrm{ker}_{-\nu }\left( x\right) &=&\cos \pi \nu \,\mathrm{ker}_{\nu
}\left( x\right) -\sin \pi \nu \,\mathrm{kei}_{\nu }\left( x\right) ,
\label{ker_-nu_def} \\
\mathrm{kei}_{-\nu }\left( x\right) &=&\sin \pi \nu \,\mathrm{ker}_{\nu
}\left( x\right) +\cos \pi \nu \,\mathrm{kei}_{\nu }\left( x\right) .
\label{kei_-nu_def}
\end{eqnarray}

\bigskip

\begin{theorem}
$\forall x,\nu >0$, the following order derivatives hold true:%
\begin{eqnarray}
&&\frac{\partial \,\mathrm{ber}_{-\nu }\left( x\right) }{\partial \nu }
\label{Dber_-nu_resultado} \\
&=&-\mathrm{Re}\left[ e^{i\pi \nu /2}\left\{ \left( e^{-i\pi \nu }+\cos \pi
\nu \right) K_{\nu }\left( e^{i\pi /4}x\right) +\frac{2}{\pi }\sin \pi \nu 
\frac{\partial K_{\nu }}{\partial \nu }\left( e^{i\pi /4}x\right) \right\}
\right.  \notag \\
&&+\left. \frac{\partial J_{\nu }}{\partial \nu }\left( e^{-i\pi /4}x\right) %
\right] ,  \notag
\end{eqnarray}%
and%
\begin{eqnarray}
&&\frac{\partial \,\mathrm{ber}_{-\nu }\left( x\right) }{\partial \nu }
\label{Dbei_-nu_resultado} \\
&=&-\mathrm{Im}\left[ e^{i\pi \nu /2}\left\{ \left( e^{-i\pi \nu }+\cos \pi
\nu \right) K_{\nu }\left( e^{i\pi /4}x\right) +\frac{2}{\pi }\sin \pi \nu 
\frac{\partial K_{\nu }}{\partial \nu }\left( e^{i\pi /4}x\right) \right\}
\right.  \notag \\
&&+\left. \frac{\partial J_{\nu }}{\partial \nu }\left( e^{-i\pi /4}x\right) %
\right] .  \notag
\end{eqnarray}
\end{theorem}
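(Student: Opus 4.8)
The plan is to deduce (\ref{Dber_-nu_resultado})--(\ref{Dbei_-nu_resultado}) from the preceding theorem via the reflection formulas (\ref{ber_-nu_def})--(\ref{kei_-nu_def}). First I would assemble the complex combination $\mathrm{ber}_{-\nu}(x)+i\,\mathrm{bei}_{-\nu}(x)$: multiplying (\ref{bei_-nu_def}) by $i$ and adding it to (\ref{ber_-nu_def}), the trigonometric coefficients telescope, since $\cos\pi\nu-i\sin\pi\nu=e^{-i\pi\nu}$ and $\sin\pi\nu+i\cos\pi\nu=ie^{-i\pi\nu}$, so that
\begin{equation*}
\mathrm{ber}_{-\nu}(x)+i\,\mathrm{bei}_{-\nu}(x)=e^{-i\pi\nu}\bigl[\mathrm{ber}_{\nu}(x)+i\,\mathrm{bei}_{\nu}(x)\bigr]+\frac{2}{\pi}\sin\pi\nu\,\bigl[\mathrm{ker}_{\nu}(x)+i\,\mathrm{kei}_{\nu}(x)\bigr].
\end{equation*}
Inserting the definitions (\ref{ber_nu_def}) and (\ref{bei_nu_def}), the factor $e^{-i\pi\nu}e^{i\pi\nu}=1$ disappears from the first bracket, leaving the master identity
\begin{equation*}
\mathrm{ber}_{-\nu}(x)+i\,\mathrm{bei}_{-\nu}(x)=J_{\nu}\left( e^{-i\pi /4}x\right)+\frac{2}{\pi}\sin\pi\nu\;e^{-i\pi\nu/2}\,K_{\nu}\left( e^{i\pi /4}x\right),
\end{equation*}
valid for every real $\nu$ and every $x\ge 0$. (Equivalently one can read it off directly from (\ref{ber_nu_def}) applied with $\nu\mapsto-\nu$ together with the classical connection $I_{-\nu}(z)-I_{\nu}(z)=\tfrac{2}{\pi}\sin(\pi\nu)\,K_{\nu}(z)$ specialised to $z=e^{i\pi /4}x$, after verifying the phase identity $I_{\nu}(e^{i\pi /4}x)=e^{i\pi\nu/2}J_{\nu}(e^{-i\pi /4}x)$.)

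Next I would differentiate the master identity with respect to $\nu$. For fixed $x>0$ both $J_{\nu}(e^{-i\pi /4}x)$ and $K_{\nu}(e^{i\pi /4}x)$ are entire functions of $\nu$, so termwise differentiation is legitimate, and in the notation of the paper $\tfrac{\partial J_{\nu}}{\partial\nu}$ and $\tfrac{\partial K_{\nu}}{\partial\nu}$ denote the order derivatives evaluated at the indicated arguments. The $J$--term contributes simply $\tfrac{\partial J_{\nu}}{\partial\nu}(e^{-i\pi /4}x)$, with no extra prefactor, precisely because the accompanying $e^{\pm i\pi\nu}$ already cancelled. For the $K$--term the product rule acts on the three factors $\sin\pi\nu$, $e^{-i\pi\nu/2}$ and $K_{\nu}(e^{i\pi /4}x)$: the derivative of the sine contributes $\pi\cos\pi\nu$, that of the exponential $-\tfrac{i\pi}{2}$, and the surviving term carries $\tfrac{\partial K_{\nu}}{\partial\nu}$. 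After multiplying through by $\tfrac{2}{\pi}$ and regrouping the phases into the compact combinations $e^{-i\pi\nu}+\cos\pi\nu$ and $\sin\pi\nu$ that appear in the statement, one arrives at a single complex identity for $\tfrac{\partial}{\partial\nu}\bigl[\mathrm{ber}_{-\nu}(x)+i\,\mathrm{bei}_{-\nu}(x)\bigr]$ expressed through $\tfrac{\partial J_{\nu}}{\partial\nu}(e^{-i\pi /4}x)$, $K_{\nu}(e^{i\pi /4}x)$ and $\tfrac{\partial K_{\nu}}{\partial\nu}(e^{i\pi /4}x)$.

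Finally, since $\mathrm{ber}_{-\nu}(x)$ and $\mathrm{bei}_{-\nu}(x)$ are real for real $\nu$ and $x\ge 0$, and since $\tfrac{\partial}{\partial\nu}$ commutes with $\mathrm{Re}$ and $\mathrm{Im}$, taking the real part of that identity gives (\ref{Dber_-nu_resultado}) and taking the imaginary part gives (\ref{Dbei_-nu_resultado}). A useful independent check — and an alternative route to the same conclusion — is to differentiate the four reflection formulas (\ref{ber_-nu_def})--(\ref{kei_-nu_def}) individually, substitute (\ref{D_ber_nu_JL})--(\ref{D_kei_nu_JL}) from the preceding theorem, and confirm that all the $\mathrm{ber}_{\nu}$ and $\mathrm{bei}_{\nu}$ contributions cancel in pairs, leaving only the $J$-- and $K$--order-derivative terms. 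I expect the only real difficulty to be purely algebraic: keeping the factors $e^{\pm i\pi /4}$, $e^{\pm i\pi\nu/2}$ and $e^{\pm i\pi\nu}$ grouped so that the trigonometric coefficients collapse exactly to the stated compact form, and making sure no spurious $\pm\pi$ rotation of the argument of $K_{\nu}$ is introduced (so that the $\mathrm{ker}/\mathrm{kei}$ structure reappears cleanly); there are no analytic subtleties, everything in sight being analytic in $\nu$.
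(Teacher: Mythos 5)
Your proposal is correct and, in substance, follows the same route as the paper: both rest on the reflection formulas (\ref{ber_-nu_def})--(\ref{bei_-nu_def}), the definitions (\ref{ber_nu_def})--(\ref{bei_nu_def}), and differentiation in $\nu$. The difference is organizational: the paper differentiates the two real reflection formulas separately, substitutes (\ref{D_ber_nu_JL})--(\ref{D_kei_nu_JL}), and repeatedly invokes (\ref{property_Re}) and (\ref{Property_Im}) to recombine the $\mathrm{Re}$ and $\mathrm{Im}$ pieces, whereas you complexify first and differentiate the single identity
\begin{equation*}
\mathrm{ber}_{-\nu}\left( x\right) +i\,\mathrm{bei}_{-\nu}\left( x\right)
=J_{\nu }\left( e^{-i\pi /4}x\right) +\frac{2}{\pi }\sin \pi \nu
\,e^{-i\pi \nu /2}K_{\nu }\left( e^{i\pi /4}x\right)
\end{equation*}
once; your master identity checks out, and this bookkeeping is cleaner. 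Two remarks. First, make the sign convention explicit: in the theorem $\partial \,\mathrm{ber}_{-\nu }/\partial \nu $ denotes the order derivative evaluated at order $-\nu $, so the chain rule applied to $\nu \mapsto \mathrm{ber}_{-\nu }(x)$ is what supplies the overall minus sign in (\ref{Dber_-nu_resultado}); your write-up passes to real and imaginary parts without saying where that sign comes from, and this is exactly the step where a reader could lose a factor of $-1$. Second, your computation produces the prefactor $e^{-i\pi \nu /2}$ in front of the braces, not the $e^{+i\pi \nu /2}$ printed in the statement; the paper's own intermediate displays (which carry $e^{-i\pi \nu /2}$ via (\ref{ker_nu_Re}), (\ref{kei_nu_Im}) and (\ref{D_ker_nu_JL})) agree with you, so this, together with the mislabelled left-hand side of (\ref{Dbei_-nu_resultado}), is a typo in the theorem rather than a gap in your argument --- indeed one advantage of your complexified route is that it makes the correct phase immediately visible.
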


\begin{proof}
Performing the order derivative in (\ref{ber_-nu_def}) and taking into
account (\ref{D_ber_nu_JL})\ and (\ref{D_bei_nu_JL}), we arrive at%
\begin{eqnarray}
&&-\frac{\partial \,\mathrm{ber}_{-\nu }\left( x\right) }{\partial \nu }
\label{Dber-nu_1} \\
&=&\sin \pi \nu \,\mathrm{Im}\left[ e^{i\pi \nu }\frac{\partial J_{\nu }}{%
\partial \nu }\left( e^{-i\pi /4}x\right) \right] +\cos \pi \nu \,\mathrm{Re}%
\left[ e^{i\pi \nu }\frac{\partial J_{\nu }}{\partial \nu }\left( e^{-i\pi
/4}x\right) \right]  \notag \\
&&+2\cos \pi \nu \,\mathrm{ker}_{\nu }\left( x\right) +\frac{2}{\pi }\sin
\pi \nu \frac{\partial \,\mathrm{ker}_{\nu }\left( x\right) }{\partial \nu }.
\notag
\end{eqnarray}%
Consider now (\ref{D_ker_nu_JL}) and the property 
\begin{equation}
\cos \pi \nu \,\mathrm{Re\,}z\pm \sin \pi \nu \,\mathrm{Im\,}z=\mathrm{Re}%
\left( e^{\mp i\pi \nu }z\right) ,  \label{property_Re}
\end{equation}%
to rewrite (\ref{Dber-nu_1})\ as%
\begin{eqnarray*}
-\frac{\partial \,\mathrm{ber}_{-\nu }\left( x\right) }{\partial \nu } &=&%
\mathrm{Re}\left[ \frac{\partial J_{\nu }}{\partial \nu }\left( e^{-i\pi
/4}x\right) \right] +2\cos \pi \nu \,\mathrm{ker}_{\nu }\left( x\right) \\
&&+\sin \pi \nu \,\mathrm{kei}_{\nu }\left( x\right) +\frac{2}{\pi }\sin \pi
\nu \,\mathrm{Re}\left[ e^{-i\pi \nu /2}\frac{\partial K_{\nu }}{\partial
\nu }\left( e^{i\pi /4}x\right) \right] .
\end{eqnarray*}%
Finally, substitute (\ref{ker_nu_Re})\ and (\ref{kei_nu_Im}), and apply
again (\ref{property_Re})\ to arrive at (\ref{Dber_-nu_resultado}). We can
perform a similar proof for (\ref{Dbei_-nu_resultado}), applying the
property 
\begin{equation}
\cos \pi \nu \,\mathrm{Im\,}z\pm \sin \pi \nu \,\mathrm{Re\,}z=\mathrm{Im}%
\left( e^{\pm i\pi \nu }z\right) .  \label{Property_Im}
\end{equation}
\end{proof}

\begin{theorem}
$\forall x,\nu >0$, the following order derivatives hold true:%
\begin{equation}
\frac{\partial \,\mathrm{ker}_{-\nu }\left( x\right) }{\partial \nu }=\frac{%
\pi }{2}\mathrm{Im}\left[ e^{i\pi \nu /2}K_{\nu }\left( e^{i\pi /4}x\right) %
\right] -\mathrm{Re}\left[ e^{i\pi \nu /2}\frac{\partial K_{\nu }}{\partial
\nu }\left( e^{i\pi /4}x\right) \right] ,  \label{Dker_-nu_resultado}
\end{equation}%
and%
\begin{equation}
\frac{\partial \,\mathrm{kei}_{-\nu }\left( x\right) }{\partial \nu }=-\frac{%
\pi }{2}\mathrm{Re}\left[ e^{i\pi \nu /2}K_{\nu }\left( e^{i\pi /4}x\right) %
\right] -\mathrm{Im}\left[ e^{i\pi \nu /2}\frac{\partial K_{\nu }}{\partial
\nu }\left( e^{i\pi /4}x\right) \right] .  \label{Dkei_-nu_resultado}
\end{equation}
\end{theorem}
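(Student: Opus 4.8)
The plan is to follow the same pattern as the proof of Theorem~2, which will be considerably shorter here because the reflection formulas (\ref{ker_-nu_def}) and (\ref{kei_-nu_def}) already express $\mathrm{ker}_{-\nu}$ and $\mathrm{kei}_{-\nu}$ purely in terms of $\mathrm{ker}_{\nu}$ and $\mathrm{kei}_{\nu}$, with no Bessel-$J$ contribution to carry along. First I would differentiate (\ref{ker_-nu_def}) with respect to $\nu$, using the same sign convention as in Theorem~2 (so that $\partial\,\mathrm{ker}_{-\nu}/\partial\nu$ denotes the order derivative of $\mathrm{ker}$ evaluated at $-\nu$); this produces the prefactor terms $-\pi\sin\pi\nu\,\mathrm{ker}_{\nu}(x)-\pi\cos\pi\nu\,\mathrm{kei}_{\nu}(x)$ together with $\cos\pi\nu\,\partial_{\nu}\mathrm{ker}_{\nu}(x)-\sin\pi\nu\,\partial_{\nu}\mathrm{kei}_{\nu}(x)$.

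Next I would substitute the closed forms (\ref{D_ker_nu_JL}) and (\ref{D_kei_nu_JL}) from Theorem~1. The pieces $\tfrac{\pi}{2}\,\mathrm{kei}_{\nu}(x)$ and $-\tfrac{\pi}{2}\,\mathrm{ker}_{\nu}(x)$ carried by those two formulas combine with the prefactor terms above, and after collecting I expect the purely Kelvin part to reduce to $-\tfrac{\pi}{2}\bigl(\sin\pi\nu\,\mathrm{ker}_{\nu}(x)+\cos\pi\nu\,\mathrm{kei}_{\nu}(x)\bigr)$, which by (\ref{kei_-nu_def}) equals $-\tfrac{\pi}{2}\,\mathrm{kei}_{-\nu}(x)$, or equivalently $-\tfrac{\pi}{2}\,\mathrm{Im}\!\left[e^{i\pi\nu/2}K_{\nu}(e^{i\pi/4}x)\right]$ in view of (\ref{bei_nu_def}). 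The surviving $\partial_{\nu}K_{\nu}$ contribution is $\cos\pi\nu\,\mathrm{Re}\!\left[e^{-i\pi\nu/2}\partial_{\nu}K_{\nu}(e^{i\pi/4}x)\right]-\sin\pi\nu\,\mathrm{Im}\!\left[e^{-i\pi\nu/2}\partial_{\nu}K_{\nu}(e^{i\pi/4}x)\right]$, which the identity (\ref{property_Re}) collapses into $\mathrm{Re}\!\left[e^{i\pi\nu/2}\partial_{\nu}K_{\nu}(e^{i\pi/4}x)\right]$. Assembling the two contributions and restoring the overall sign dictated by the convention yields (\ref{Dker_-nu_resultado}).

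For (\ref{Dkei_-nu_resultado}) the steps are entirely parallel: differentiate (\ref{kei_-nu_def}) instead, substitute (\ref{D_ker_nu_JL}) and (\ref{D_kei_nu_JL}), collect the Kelvin terms into $\tfrac{\pi}{2}\bigl(\cos\pi\nu\,\mathrm{ker}_{\nu}(x)-\sin\pi\nu\,\mathrm{kei}_{\nu}(x)\bigr)=\tfrac{\pi}{2}\,\mathrm{ker}_{-\nu}(x)=\tfrac{\pi}{2}\,\mathrm{Re}\!\left[e^{i\pi\nu/2}K_{\nu}(e^{i\pi/4}x)\right]$ by (\ref{ker_-nu_def}) and (\ref{bei_nu_def}), and then use (\ref{Property_Im}) to merge $\sin\pi\nu\,\mathrm{Re}[\,\cdot\,]+\cos\pi\nu\,\mathrm{Im}[\,\cdot\,]$ into $\mathrm{Im}\!\left[e^{i\pi\nu/2}\partial_{\nu}K_{\nu}(e^{i\pi/4}x)\right]$, flipping the overall sign at the end as above. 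I do not anticipate a genuine obstacle here, since everything reduces to elementary algebra with the derivatives of the trigonometric prefactors; the one point requiring care will be the sign bookkeeping — keeping the convention of Theorem~2 consistent so that the overall sign on the right-hand sides of (\ref{Dker_-nu_resultado})--(\ref{Dkei_-nu_resultado}) comes out correctly — together with keeping straight the two equivalent descriptions of the Kelvin combinations, via the reflection formulas (\ref{ker_-nu_def})--(\ref{kei_-nu_def}) on one hand and via the defining relation $\mathrm{ker}_{\nu}+i\,\mathrm{kei}_{\nu}=e^{-i\pi\nu/2}K_{\nu}(e^{i\pi/4}x)$ from (\ref{bei_nu_def}) on the other.
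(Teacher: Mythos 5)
Your proposal is correct and follows essentially the same route as the paper: differentiate the reflection formulas (\ref{ker_-nu_def})--(\ref{kei_-nu_def}), substitute the expressions of Theorem~1, and collapse the trigonometric prefactors with the phase identities (\ref{property_Re}) and (\ref{Property_Im}); all your intermediate combinations check out, including the signs. The only cosmetic difference is that you identify the residual Kelvin combination as $\mp\tfrac{\pi}{2}\,\mathrm{kei}_{-\nu}$ or $\tfrac{\pi}{2}\,\mathrm{ker}_{-\nu}$ (which tacitly uses $K_{-\nu}=K_{\nu}$), whereas the paper applies the phase identities directly to $\mathrm{ker}_{\nu}=\mathrm{Re}\bigl[e^{-i\pi\nu/2}K_{\nu}(e^{i\pi/4}x)\bigr]$ and $\mathrm{kei}_{\nu}=\mathrm{Im}\bigl[e^{-i\pi\nu/2}K_{\nu}(e^{i\pi/4}x)\bigr]$ — the two readings are equivalent.
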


\begin{proof}
Perform in (\ref{ker_-nu_def}) the derivative with respect to the order,
taking into account (\ref{D_ker_nu_JL})\ and (\ref{D_kei_nu_JL}), to arrive
at%
\begin{eqnarray*}
-\frac{\partial \,\mathrm{ker}_{-\nu }\left( x\right) }{\partial \nu } &=&-%
\frac{\pi }{2}\sin \pi \nu \,\mathrm{ker}_{\nu }\left( x\right) +\cos \pi
\nu \,\mathrm{Re}\left[ e^{-i\pi \nu /2}\frac{\partial K_{\nu }}{\partial
\nu }\left( e^{i\pi /4}x\right) \right] \\
&&-\frac{\pi }{2}\cos \pi \nu \,\mathrm{kei}_{\nu }\left( x\right) -\sin \pi
\nu \,\mathrm{Im}\left[ e^{-i\pi \nu /2}\frac{\partial K_{\nu }}{\partial
\nu }\left( e^{i\pi /4}x\right) \right] .
\end{eqnarray*}%
Substitute now (\ref{ker_nu_Re})\ and (\ref{kei_nu_Im})\ and apply the
properties (\ref{ker_nu_Re}) and (\ref{Property_Im}) to obtain (\ref%
{Dker_-nu_resultado}). Similarly, we can derive (\ref{Dkei_-nu_resultado}).
\end{proof}

\bigskip

Therefore, according to the results of the above theorems, knowing $\partial
J_{\nu }/\partial \nu $ and $\partial K_{\nu }/\partial \nu $ in
closed-form, we can express the order derivatives of the Kelvin functions in
closed-form as well. Recently in \cite{DBesselJL}, the order derivative of
the Bessel function of the first kind $\forall \nu >0$, $z\in 
\mathbb{C}
$, $z\neq 0$, is expressed in closed-form as follows:\ 
\begin{eqnarray}
&&\frac{\partial J_{\nu }\left( z\right) }{\partial \nu }
\label{DJnu_closed_form} \\
&=&\frac{-\pi J_{-\nu }\left( z\right) \csc \pi \nu }{2\Gamma ^{2}\left( \nu
+1\right) }\left( \frac{z}{2}\right) ^{2\nu }\,_{2}F_{3}\left( \left. 
\begin{array}{c}
\nu ,\nu +\frac{1}{2} \\ 
\nu +1,\nu +1,2\nu +1%
\end{array}%
\right\vert -z^{2}\right)  \notag \\
&&-J_{\nu }\left( z\right) \left[ \frac{z^{2}}{4\left( 1-\nu ^{2}\right) }%
\,_{3}F_{4}\left( \left. 
\begin{array}{c}
1,1,\frac{3}{2} \\ 
2,2,2-\nu ,2+\nu%
\end{array}%
\right\vert -z^{2}\right) \right.  \notag \\
&&\quad \left. 
\begin{array}{c}
\displaystyle
\\ 
\displaystyle%
\end{array}%
+\log \left( \frac{2}{z}\right) +\frac{1}{2\nu }+\psi \left( \nu \right) %
\right] .  \notag
\end{eqnarray}

However, (\ref{DJnu_closed_form})\ cannot be used for non-negative integral
orders, i.e. $\nu =0,1,2,\ldots $. A way to avoid this inconvenient is to
express the derivative of $J_{\nu }\left( z\right) $ with respect to the
order using the Meijer-$G$ function \cite{DBesselJL}, 
\begin{eqnarray}
\frac{\partial J_{\nu }\left( z\right) }{\partial \nu } &=&\frac{\pi }{2}%
\left[ Y_{\nu }\left( z\right) \frac{\left( z/2\right) ^{2\nu }}{\,\Gamma
^{2}\left( \nu +1\right) }\,_{2}F_{3}\left( \left. 
\begin{array}{c}
\nu ,1/2+\nu \\ 
2\nu +1,\nu +1,\nu +1%
\end{array}%
\right\vert -z^{2}\right) \right.  \label{DJnu_Meijer} \\
&&-\left. \frac{\nu J_{\nu }\left( z\right) }{\sqrt{\pi }}%
\,G_{2,4}^{3,0}\left( z^{2}\left\vert 
\begin{array}{c}
1/2,1 \\ 
0,0,\nu ,-\nu%
\end{array}%
\right. \right) \right] ,\quad \nu \geq 0,\,\mathrm{Re\,}z>0.  \notag
\end{eqnarray}

Similarly, for the order derivative of the Macdonald function, $\forall \nu
>0$, $z\in 
\mathbb{C}
$, $z\neq 0$, we have \cite{DBesselJL}, 
\begin{eqnarray}
&&\frac{\partial K_{\nu }\left( z\right) }{\partial \nu }
\label{DKnu_resultado} \\
&=&\frac{\pi }{2}\csc \pi \nu \left\{ \pi \cot \pi \nu \,I_{\nu }\left(
z\right) -\left[ I_{\nu }\left( z\right) +I_{-\nu }\left( z\right) \right] 
\begin{array}{c}
\displaystyle
\\ 
\displaystyle%
\end{array}%
\right.  \notag \\
&&\left. \left[ \frac{z^{2}}{4\left( 1-\nu ^{2}\right) }\,_{3}F_{4}\left(
\left. 
\begin{array}{c}
1,1,\frac{3}{2} \\ 
2,2,2-\nu ,2+\nu%
\end{array}%
\right\vert z^{2}\right) +\log \left( \frac{z}{2}\right) -\psi \left( \nu
\right) -\frac{1}{2\nu }\right] \right\}  \notag \\
&&+\frac{1}{4}\left\{ I_{-\nu }\left( z\right) \Gamma ^{2}\left( \nu \right)
\left( \frac{z}{2}\right) ^{2\nu }\,_{2}F_{3}\left( \left. 
\begin{array}{c}
\nu ,\frac{1}{2}+\nu \\ 
1+\nu ,1+\nu ,1+2\nu%
\end{array}%
\right\vert z^{2}\right) \right.  \notag \\
&&\quad -\left. I_{\nu }\left( z\right) \Gamma ^{2}\left( -\nu \right)
\left( \frac{z}{2}\right) ^{-2\nu }\,_{2}F_{3}\left( \left. 
\begin{array}{c}
\nu ,\frac{1}{2}-\nu \\ 
1-\nu ,1-\nu ,1-2\nu%
\end{array}%
\right\vert z^{2}\right) \right\} .  \notag
\end{eqnarray}

Again, (\ref{DKnu_resultado})\ cannot be used for non-negative integral
orders, as well as for non-negative half-integral orders, i.e. $\nu
=0,1/2,1,3/2,\ldots $. In these cases, we can employ the following
representation, which uses Meijer-$G$ functions: 
\begin{eqnarray}
&&\frac{\partial K_{\nu }\left( z\right) }{\partial \nu }
\label{DKnu_Meijer} \\
&=&\frac{\nu }{2}\left[ \frac{K_{\nu }\left( z\right) }{\sqrt{\pi }}%
\,G_{2,4}^{3,1}\left( z^{2}\left\vert 
\begin{array}{c}
1/2,1 \\ 
0,0,\nu ,-\nu%
\end{array}%
\right. \right) -\sqrt{\pi }I_{\nu }\left( z\right) G_{2,4}^{4,0}\left(
z^{2}\left\vert 
\begin{array}{c}
1/2,1 \\ 
0,0,\nu ,-\nu%
\end{array}%
\right. \right) \right] ,  \notag \\
\quad &&\nu \geq 0,\,\mathrm{Re\,}z>0.  \notag
\end{eqnarray}

It is worth noting that the numerical evaluation of (\ref{DJnu_closed_form})
and (\ref{DKnu_resultado})\ is $\approx 10$ times faster than (\ref%
{DJnu_Meijer}) and (\ref{DKnu_Meijer}),\ respectively.

\section{Application to the calculation of some integrals\label{Section:
Integrals}}

Comparing the integral representations given in the Introduction to the
results given in Section \ref{Section: Order derivatives}, we can calculate
some integrals involving Kelvin functions that do not seem to be reported in
the literature.

\begin{theorem}
The following integrals hold true:%
\begin{eqnarray}
&&\int_{0}^{1}u^{\nu +1}\log \left( 1-u^{2}\right) f_{\nu }\left( xu\right)
du  \label{Int_ber_bei_resultado} \\
&=&\frac{1}{\sqrt{2}x}\left\{ \left[ \frac{\pi }{4}+\log \left( \frac{x}{2}%
\right) +\gamma \right] f_{\nu +1}\left( x\right) \right.  \notag \\
&&\pm \left. \left[ \frac{\pi }{4}-\log \left( \frac{x}{2}\right) -\gamma %
\right] g_{\nu +1}\left( x\right) +\sqrt{2}\,\mathrm{Re}\left[ e^{i\pi
\left( \nu \pm 1/4\right) }\frac{\partial J_{\nu }}{\partial \nu }\left(
e^{-i\pi /4}x\right) \right] \right\} ,  \notag
\end{eqnarray}%
where $f_{\nu }$ and $g_{\nu }$ denote the ordered pair $\mathrm{ber}_{\nu }$
and $\mathrm{bei}_{\nu }$.
\end{theorem}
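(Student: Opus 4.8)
The plan is to reduce both cases of the statement (the two choices of the ordered pair $f_\nu,g_\nu$ and the two signs) to a single complex identity by means of the definition $\mathrm{ber}_\nu(x)+i\,\mathrm{bei}_\nu(x)=e^{i\pi\nu}J_\nu(e^{-i\pi/4}x)$. Since $f_\nu$ and $g_\nu$ are the real and imaginary parts of $e^{i\pi\nu}J_\nu(e^{-i\pi/4}x)$, the two asserted formulas are the real and imaginary parts of the equation obtained after multiplying by $e^{i\pi\nu}$, so it suffices to evaluate $\int_0^1 u^{\nu+1}\log(1-u^2)\,J_\nu(au)\,du$ with $a=e^{-i\pi/4}x$. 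For this I would start from Sonine's first finite integral $\int_0^1 u^{\nu+1}(1-u^2)^{\mu}J_\nu(au)\,du=2^{\mu}\Gamma(\mu+1)\,a^{-\mu-1}J_{\nu+\mu+1}(a)$, differentiate both sides with respect to $\mu$, and set $\mu=0$. On the left this brings down $\log(1-u^2)$; on the right, the $\mu$-derivative acting on $2^\mu$, $\Gamma(\mu+1)$ and $a^{-\mu-1}$ (with $\psi(1)=-\gamma$) produces $\tfrac1a J_{\nu+1}(a)\bigl(\log(2/a)-\gamma\bigr)$, while acting on $J_{\nu+\mu+1}(a)$ it produces the Bessel order-derivative at order $\nu+1$, namely $\tfrac1a\,\tfrac{\partial J_{\nu+1}}{\partial\nu}(a)$.

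What remains is to put $a=e^{-i\pi/4}x$ and take real and imaginary parts. The facts needed are $1/a=e^{i\pi/4}/x$ with $e^{i\pi/4}=(1+i)/\sqrt2$, the identity $\log(2/a)=\log(2/x)+i\pi/4$, and $e^{i\pi\nu}J_{\nu+1}(e^{-i\pi/4}x)=-\bigl(\mathrm{ber}_{\nu+1}(x)+i\,\mathrm{bei}_{\nu+1}(x)\bigr)$, which is again the defining relation at order $\nu+1$. After multiplying by $e^{i\pi\nu}$, the logarithmic piece becomes $\tfrac1{\sqrt2\,x}(1+i)\bigl(\mathrm{ber}_{\nu+1}+i\,\mathrm{bei}_{\nu+1}\bigr)\bigl(\log(x/2)+\gamma-i\pi/4\bigr)$; expanding this triple product, its real part is $\tfrac1{\sqrt2\,x}\bigl\{[\tfrac\pi4+\log(x/2)+\gamma]\,\mathrm{ber}_{\nu+1}+[\tfrac\pi4-\log(x/2)-\gamma]\,\mathrm{bei}_{\nu+1}\bigr\}$ and its imaginary part is the same with the two square brackets interchanged (and a sign change) --- precisely the $f_{\nu+1},g_{\nu+1}$ part of the statement, with the $\pm$. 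Finally, the elementary identity $\mathrm{Im}\,z-\mathrm{Re}\,z=-\sqrt2\,\mathrm{Re}\bigl(e^{i\pi/4}z\bigr)$ (and its analogue for the other pair) packs the order-derivative piece into the single term $\sqrt2\,\mathrm{Re}\bigl[e^{i\pi(\nu\pm1/4)}\,\tfrac{\partial J_{\nu+1}}{\partial\nu}(e^{-i\pi/4}x)\bigr]$, and the leftover $1/(\sqrt2\,x)$ out front is what is left of $1/a$.

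A second derivation, matching the opening sentence of the section, compares Apelblat's (\ref{Dber_Int}) and (\ref{Dbei_Int}) with the closed forms (\ref{D_ber_nu_JL}) and (\ref{D_bei_nu_JL}): equating the two presentations of $\partial\mathrm{ber}_\nu/\partial\nu$ and of $\partial\mathrm{bei}_\nu/\partial\nu$ yields two linear equations whose unknowns are $\int_0^1 u^{(\nu-1)/2}[\gamma+\log(1-u)]\,\mathrm{ber}_{\nu-1}(x\sqrt u)\,du$ and the companion with $\mathrm{bei}_{\nu-1}$ in place of $\mathrm{ber}_{\nu-1}$; solving the $2\times2$ system, substituting $u=t^2$ (so that $u^{(\nu-1)/2}\,du\mapsto 2t^{\nu}\,dt$ and $\log(1-u)\mapsto\log(1-t^2)$) and relabelling $\nu-1\mapsto\nu$ gives the claim. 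Here the $-\tfrac{3\pi}4\,\mathrm{bei}_\nu$ of (\ref{Dber_Int}) combined with the $-\pi\,\mathrm{bei}_\nu$ of (\ref{D_ber_nu_JL}) is what leaves the $\tfrac\pi4$, the $\gamma$ inside $\gamma+\log(1-u)$ is absorbed into $\log(x/2)+\gamma$ by invoking $\int_0^1 u^{\nu+1}J_\nu(au)\,du=J_{\nu+1}(a)/a$ in complexified form, and for the shuffling of real and imaginary parts I would use $\mathrm{Re}\,z=(z+\bar z)/2$, $\overline{J_\nu(z)}=J_\nu(\bar z)$ and the rotation identities $\cos\pi\nu\,\mathrm{Re}\,z\pm\sin\pi\nu\,\mathrm{Im}\,z=\mathrm{Re}(e^{\mp i\pi\nu}z)$ already used in the paper.

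The step I expect to be the real obstacle is the joint bookkeeping of the order shift --- the Kelvin functions inside the integral carry order $\nu$, those on the right order $\nu+1$ --- together with keeping straight which quantity is a real part and which an imaginary part, all the way through the substitution $u=t^2$ and the relabelling. Two checks keep this honest: the $\mu\to0$ limit of Sonine's integral must reproduce $\int_0^1 u^{\nu+1}J_\nu(au)\,du=J_{\nu+1}(a)/a$, and the two independent derivations (direct, and via (\ref{Dber_Int})--(\ref{D_bei_nu_JL})) must land on the same closed form.
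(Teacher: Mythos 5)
Your first derivation is correct but genuinely different from the paper's. The paper proves this theorem exactly by your second route: it equates Apelblat's integral forms (\ref{Dber_Int})--(\ref{Dbei_Int}) with the closed forms (\ref{D_ber_nu_JL})--(\ref{D_bei_nu_JL}), shifts $\nu\to\nu+1$, substitutes $u\to u^{2}$, adds and subtracts the two resulting identities (your ``$2\times 2$ system'') using $\mathrm{Im}\,z\mp\mathrm{Re}\,z=\sqrt{2}\,\mathrm{Re}(e^{\mp i3\pi/4}z)$, and finally removes the stray $\gamma$ by means of the indefinite integrals $\int u^{\nu+1}\mathrm{ber}_{\nu}(u)\,du$ and $\int u^{\nu+1}\mathrm{bei}_{\nu}(u)\,du$ from \cite[Eqn. 10.71.1]{DLMF} --- which is precisely your ``complexified'' $\int_{0}^{1}u^{\nu+1}J_{\nu}(au)\,du=J_{\nu+1}(a)/a$. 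Your Sonine route is self-contained and arguably cleaner: it needs neither Apelblat's representations nor the closed forms of Section \ref{Section: Order derivatives}, only the classical finite integral $\int_{0}^{1}u^{\nu+1}(1-u^{2})^{\mu}J_{\nu}(au)\,du=2^{\mu}\Gamma(\mu+1)a^{-\mu-1}J_{\nu+\mu+1}(a)$ differentiated in $\mu$ at $\mu=0$ (a step that deserves one line of justification for interchanging $\partial/\partial\mu$ with the integral, and for the validity of Sonine's formula at the complex argument $a=e^{-i\pi/4}x$, both routine). I checked your complex bookkeeping: the real and imaginary parts of $(1+i)(B+iE)(L-i\pi/4)$ do reproduce the two sign cases of the statement exactly. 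One point worth flagging: your derivation unambiguously produces the order derivative \emph{at order $\nu+1$}, i.e. $\partial J_{\sigma}/\partial\sigma$ evaluated at $\sigma=\nu+1$ and argument $e^{-i\pi/4}x$, whereas the theorem as printed writes $\partial J_{\nu}/\partial\nu$. The paper's own proof performs the same shift $\nu\to\nu+1$ and then silently keeps the subscript $\nu$, so this is almost certainly a notational slip in the paper rather than an error in your argument; your version makes the correct reading explicit, which is a small but genuine added value of the independent derivation.
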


\begin{proof}
If we equate (\ref{Dber_Int})\ to (\ref{D_ber_nu_JL}), and then we shift the
order $\nu \rightarrow \nu +1$, and we change the integration variable $%
u\rightarrow u^{2}$, we obtain%
\begin{eqnarray}
&&\int_{0}^{1}u^{\nu /2}\left[ \gamma +\log \left( 1-u^{2}\right) \right] %
\left[ \mathrm{ber}_{\nu }\left( x\,u\right) +\mathrm{bei}_{\nu }\left(
x\,u\right) \right] du  \label{Int_ber} \\
&=&\frac{\sqrt{2}}{x}\left\{ \frac{\pi }{4}\,\mathrm{bei}_{\nu +1}\left(
x\right) +\log \left( \frac{x}{2}\right) \mathrm{ber}_{\nu +1}\left(
x\right) -\mathrm{Re}\left[ e^{i\pi \left( \nu +1\right) }\frac{\partial
J_{\nu }}{\partial \nu }\left( e^{-i\pi /4}x\right) \right] \right\} . 
\notag
\end{eqnarray}%
Similarly, from (\ref{Dbei_Int})\ and (\ref{D_bei_nu_JL}), we arrive at%
\begin{eqnarray}
&&\int_{0}^{1}u^{\nu /2}\left[ \gamma +\log \left( 1-u^{2}\right) \right] %
\left[ \mathrm{ber}_{\nu }\left( x\,u\right) +\mathrm{bei}_{\nu }\left(
x\,u\right) \right] du  \label{Int_bei} \\
&=&\frac{\sqrt{2}}{x}\left\{ \frac{\pi }{4}\,\mathrm{ber}_{\nu +1}\left(
x\right) -\log \left( \frac{x}{2}\right) \mathrm{bei}_{\nu +1}\left(
x\right) +\mathrm{Im}\left[ e^{i\pi \left( \nu +1\right) }\frac{\partial
J_{\nu }}{\partial \nu }\left( e^{-i\pi /4}x\right) \right] \right\} . 
\notag
\end{eqnarray}%
Summing up (\ref{Int_ber})\ and (\ref{Int_bei}), and taking into account the
property 
\begin{equation*}
\mathrm{Im}\,z-\mathrm{Re}\,z=\sqrt{2}\mathrm{Re}\,\left( e^{-i3\pi
/4}z\right) ,
\end{equation*}%
we have%
\begin{eqnarray}
&&\int_{0}^{1}u^{\nu /2}\left[ \gamma +\log \left( 1-u^{2}\right) \right] 
\mathrm{ber}_{\nu }\left( x\,u\right) du  \label{Int_ber_2} \\
&=&\frac{1}{\sqrt{2}x}\left\{ \left[ \frac{\pi }{4}+\log \left( \frac{x}{2}%
\right) \right] \mathrm{ber}_{\nu +1}\left( x\right) +\left[ \frac{\pi }{4}%
-\log \left( \frac{x}{2}\right) \right] \mathrm{bei}_{\nu +1}\left( x\right)
\right.  \notag \\
&&\qquad +\left. \sqrt{2}\mathrm{Re}\left[ e^{i\pi \left( \nu +1/4\right) }%
\frac{\partial J_{\nu }}{\partial \nu }\left( e^{-i\pi /4}x\right) \right]
\right\} .  \notag
\end{eqnarray}%
Similarly, subtracting (\ref{Int_ber})\ and (\ref{Int_bei}), and taking into
account the property%
\begin{equation*}
\mathrm{Im}\,z+\mathrm{Re}\,z=\sqrt{2}\mathrm{Re}\,\left( e^{-i\pi
/4}z\right) ,
\end{equation*}%
we have%
\begin{eqnarray}
&&\int_{0}^{1}u^{\nu /2}\left[ \gamma +\log \left( 1-u^{2}\right) \right] 
\mathrm{bei}_{\nu }\left( x\,u\right) du  \label{Int_bei_2} \\
&=&\frac{1}{\sqrt{2}x}\left\{ \left[ -\frac{\pi }{4}+\log \left( \frac{x}{2}%
\right) \right] \mathrm{ber}_{\nu +1}\left( x\right) +\left[ \frac{\pi }{4}%
+\log \left( \frac{x}{2}\right) \right] \mathrm{bei}_{\nu +1}\left( x\right)
\right.  \notag \\
&&\qquad +\left. \sqrt{2}\mathrm{Re}\left[ e^{i\pi \left( \nu -1/4\right) }%
\frac{\partial J_{\nu }}{\partial \nu }\left( e^{-i\pi /4}x\right) \right]
\right\} .  \notag
\end{eqnarray}%
Considering now the following indefinite integrals \cite[Eqn. 10.71.1]{DLMF},%
\begin{eqnarray}
\int u^{\nu +1}\mathrm{ber}_{\nu }\left( u\right) du &=&\frac{u^{\nu +1}}{%
\sqrt{2}}\left[ \mathrm{bei}_{\nu +1}\left( x\right) -\mathrm{ber}_{\nu
+1}\left( x\right) \right] ,  \label{Int_ber_NIST} \\
\int u^{\nu +1}\mathrm{bei}_{\nu }\left( u\right) du &=&-\frac{u^{\nu +1}}{%
\sqrt{2}}\left[ \mathrm{bei}_{\nu +1}\left( x\right) +\mathrm{ber}_{\nu
+1}\left( x\right) \right] ,  \label{Int_bei_NIST}
\end{eqnarray}%
we obtain (\ref{Int_ber_bei_resultado}) with $f_{\nu }=\mathrm{ber}_{\nu }$
and $g_{\nu }=\mathrm{bei}_{\nu }$ from (\ref{Int_ber_2})\ and (\ref%
{Int_ber_NIST});\ and (\ref{Int_ber_bei_resultado}) with $f_{\nu }=\mathrm{%
bei}_{\nu }$ and $g_{\nu }=\mathrm{ber}_{\nu }$ from (\ref{Int_bei_2}) and (%
\ref{Int_bei_NIST}), as we wanted to prove.
\end{proof}

\section{Conclusions\label{Section: Conclusions}}

We have obtained closed-form expressions of the derivatives of the Kelvin
functions with respect to the order in (\ref{D_ber_nu_JL})-(\ref{D_kei_nu_JL}%
) for $\nu \in 
\mathbb{R}
$. These expressions are based on recent closed-form expressions found in
the literature for $\partial J_{\nu }/\partial \nu $ and $\partial K_{\nu
}/\partial \nu $, i.e. (\ref{DJnu_closed_form})-(\ref{DKnu_Meijer}).

On the one hand, unlike the order derivatives of the $\mathrm{ber}_{\nu }$
and $\,\mathrm{bei}_{\nu }$ functions found in the literature, i.e. (\ref%
{Dber_Brychov})\ and (\ref{Dbei_Brychov}), the results given in (\ref%
{D_ber_nu_JL})\ and (\ref{D_bei_nu_JL}) are much more simple. Moreover, the
expressions obtained for negative orders given in (\ref{Dber_-nu_resultado})
and (\ref{Dbei_-nu_resultado}), in combination with (\ref{DJnu_Meijer})\ and
(\ref{DKnu_Meijer}), allow us the evaluation of the order derivatives of the 
$\mathrm{ber}_{\nu }$ and $\,\mathrm{bei}_{\nu }$ functions for negative
integral orders. The latter cannot be performed with the formulas found in
the literature, i.e. (\ref{DBern_Brychov})\ and (\ref{DBein_Brychov}). Also,
the derivatives of the $\mathrm{ker}_{\nu }$, and $\,\mathrm{kei}_{\nu }$
functions with respect to the order given in (\ref{D_ker_nu_JL})\ and (\ref%
{D_kei_nu_JL}) for non-negative orders, and the ones given in (\ref%
{Dker_-nu_resultado}) and (\ref{Dkei_-nu_resultado})\ for negative orders,
seem to be novel.

On the other hand, we have calculated the integrals given in (\ref%
{Int_ber_bei_resultado}). These integrals do not seem to be reported in
closed-form in the literature. Finally, in the Appendix, we provide an
alternative derivation of the integral representations of the $\mathrm{ber}$
and $\,\mathrm{bei}$ functions found in the literature.

\paragraph{Acknowledgements}

It is a pleasure to thank Prof. A. Apelblat for the literature and wise
comments given to the author.

\appendix

\section{Integral representations}

\begin{theorem}
The following integral representations hold true:%
\begin{eqnarray}
\mathrm{ber}\left( x\right) &=&\frac{2}{\pi }\int_{0}^{\pi /2}\cosh \left( 
\frac{x\,\mathrm{sc\,}\theta }{\sqrt{2}}\right) \cos \left( \frac{x\,\mathrm{%
sc\,}\theta }{\sqrt{2}}\right) du,  \label{ber(x)_Int} \\
\mathrm{bei}\left( x\right) &=&\frac{2}{\pi }\int_{0}^{\pi /2}\sinh \left( 
\frac{x\,\mathrm{sc\,}\theta }{\sqrt{2}}\right) \sin \left( \frac{x\,\mathrm{%
sc\,}\theta }{\sqrt{2}}\right) du,  \label{bei(x)_Int}
\end{eqnarray}%
where $\mathrm{sc\,}\theta =\sin \theta $ or $\cos \theta $.
\end{theorem}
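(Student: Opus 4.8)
The plan is to derive both formulas simultaneously from the $\nu=0$ case of the definition \eqref{ber_nu_def} together with a single classical integral representation of the modified Bessel function $I_{0}$, which is exactly the strategy behind the derivation in \cite{SchulzDubois}.

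First I would record that putting $\nu=0$ in \eqref{ber_nu_def} gives $\mathrm{ber}(x)+i\,\mathrm{bei}(x)=J_{0}(e^{-i\pi/4}x)$, and, since $J_{0}$ is even and $I_{0}(z)=J_{0}(iz)$, this equals $I_{0}(e^{i\pi/4}x)$ (equivalently $I_{0}(\sqrt{i}\,x)$ with the principal branch), which is Lord Kelvin's original expression. Second, I would invoke the standard formula $I_{0}(z)=\frac{1}{\pi}\int_{0}^{\pi}e^{z\cos\theta}\,d\theta$, valid for every $z\in\mathbb{C}$ since the integrand is entire in $z$ and the identity on the real axis extends by analytic continuation. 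Folding $[\pi/2,\pi]$ onto $[0,\pi/2]$ via $\theta\mapsto\pi-\theta$ turns this into $I_{0}(z)=\frac{2}{\pi}\int_{0}^{\pi/2}\cosh(z\cos\theta)\,d\theta$, and the substitution $\theta\mapsto\pi/2-\theta$ shows that the same identity holds with $\cos\theta$ replaced by $\sin\theta$; this is precisely what licenses writing $\mathrm{sc}\,\theta$ for either $\sin\theta$ or $\cos\theta$.

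Third, I would set $z=e^{i\pi/4}x$, write $e^{i\pi/4}=(1+i)/\sqrt{2}$, and note that for the real nonnegative quantity $w=x\,\mathrm{sc}\,\theta$ one has $z\,\mathrm{sc}\,\theta=\frac{w}{\sqrt{2}}+i\frac{w}{\sqrt{2}}$. Applying $\cosh(a+ib)=\cosh a\cos b+i\sinh a\sin b$ with $a=b=w/\sqrt{2}$ yields
\[
\cosh\!\Big(e^{i\pi/4}x\,\mathrm{sc}\,\theta\Big)=\cosh\!\Big(\tfrac{x\,\mathrm{sc}\,\theta}{\sqrt{2}}\Big)\cos\!\Big(\tfrac{x\,\mathrm{sc}\,\theta}{\sqrt{2}}\Big)+i\,\sinh\!\Big(\tfrac{x\,\mathrm{sc}\,\theta}{\sqrt{2}}\Big)\sin\!\Big(\tfrac{x\,\mathrm{sc}\,\theta}{\sqrt{2}}\Big).
\]
Substituting this into $I_{0}(e^{i\pi/4}x)=\frac{2}{\pi}\int_{0}^{\pi/2}\cosh(e^{i\pi/4}x\,\mathrm{sc}\,\theta)\,d\theta$ and separating real and imaginary parts produces \eqref{ber(x)_Int} and \eqref{bei(x)_Int} at once.

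There is essentially no deep obstacle; the only points needing a little care are the branch bookkeeping in identifying $I_{0}(e^{i\pi/4}x)$ with $J_{0}(e^{-i\pi/4}x)$ (handled by evenness of $J_{0}$) and the justification that the elementary integral representation of $I_{0}$ persists for a complex argument (immediate from entirety of the integrand and locally uniform convergence of the parameter integral). An equally short alternative would start from $J_{0}(z)=\frac{2}{\pi}\int_{0}^{\pi/2}\cos(z\sin\theta)\,d\theta$ with $z=e^{-i\pi/4}x$ and use $\cos(a-ib)=\cos a\cosh b+i\sin a\sinh b$, leading to the same pair of integrals.
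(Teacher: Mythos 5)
Your proof is correct, and it takes a genuinely different route from the paper's. You start from the $\nu=0$ case of (\ref{ber_nu_def}), identify $J_{0}(e^{-i\pi/4}x)=I_{0}(e^{i\pi/4}x)$ via evenness of $J_{0}$, and then split the classical representation $I_{0}(z)=\frac{2}{\pi}\int_{0}^{\pi/2}\cosh(z\,\mathrm{sc}\,\theta)\,d\theta$ into real and imaginary parts using $\cosh(a+ia)=\cosh a\cos a+i\sinh a\sin a$; every step checks out (the branch bookkeeping, the analytic continuation of the $I_{0}$ integral to complex argument, and the $\theta\mapsto\pi/2-\theta$ symmetry that yields both the $\sin\theta$ and $\cos\theta$ versions), and it delivers (\ref{ber(x)_Int}) and (\ref{bei(x)_Int}) simultaneously. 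The paper instead follows the Schulz--DuBois idea literally: it multiplies the identity $\cos A+\cos B=2\cos\frac{A+B}{2}\cos\frac{A-B}{2}$ evaluated at $A=a/s$, $B=b/s$ by $1/s$, recognizes both sides as Laplace transforms using $\mathcal{L}^{-1}[s^{-1}\cos(\alpha/s)]=\mathrm{ber}(2\sqrt{\alpha t})$ and $\mathcal{L}^{-1}[s^{-1/2}\cos(\alpha/s)]=(\pi t)^{-1/2}\cosh\sqrt{2\alpha t}\cos\sqrt{2\alpha t}$, and applies the convolution theorem to obtain the two-parameter identity (\ref{ber(a)+ber(b)}), of which the theorem is the special case $a=b$ after a change of variables. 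Your argument is shorter and more elementary, resting on a single textbook formula; the paper's Laplace-convolution machinery is heavier but buys the more general addition-type relation (\ref{ber(a)+ber(b)}) for $a\neq b$ as a byproduct, which is the real point of that method. Note also that the paper's subsequent Remark derives the same formulas from Apelblat's representations at $\nu=0$, which is closer in spirit to your direct approach, so your derivation is a legitimate third path rather than a rediscovery of either.
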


\begin{proof}
Multiply by $1/s$ the trigonometric identity,%
\begin{equation}
\cos A+\cos B=2\cos \left( \frac{A+B}{2}\right) \cos \left( \frac{A-B}{2}%
\right) ,  \label{cosA+cosB}
\end{equation}%
and take $A=a/s$, $B=b/s$ to arrive at%
\begin{equation}
\frac{1}{s}\cos \left( \frac{a}{s}\right) +\frac{1}{s}\cos \left( \frac{b}{s}%
\right) =2\frac{1}{\sqrt{s}}\cos \left( \frac{a+b}{2s}\right) \frac{1}{\sqrt{%
s}}\cos \left( \frac{a-b}{2\sqrt{s}}\right) .  \label{Int_proof_1}
\end{equation}%
Defining the following functions $f$, $g$, $f_{1}$ and $g_{1}$ via the
Laplace transform as follows, 
\begin{eqnarray*}
\mathcal{L}\left[ f\right] &=&\frac{1}{s}\cos \left( \frac{a}{s}\right) , \\
\mathcal{L}\left[ g\right] &=&\frac{1}{s}\cos \left( \frac{b}{s}\right) , \\
\mathcal{L}\left[ f_{1}\right] &=&\frac{1}{s}\cos \left( \frac{a+b}{s}%
\right) , \\
\mathcal{L}\left[ g_{1}\right] &=&\frac{1}{s}\cos \left( \frac{a-b}{s}%
\right) ,
\end{eqnarray*}%
and applying the Laplace anti-transform to (\ref{Int_proof_1}), we obtain%
\begin{eqnarray}
f+g &=&2\int_{0}^{t}f_{1}\left( t-\tau \right) g_{1}\left( \tau \right) d\tau
\label{f+g=Convolution} \\
&=&2\int_{0}^{t}f_{1}\left( \tau \right) g_{1}\left( t-\tau \right) d\tau ,
\label{f+g=Convolution2}
\end{eqnarray}%
where we have applied the convolution theorem of the Laplace transform \cite[%
Eqn. 17.12.5]{Gradshteyn}. Since, according to \cite[Eqn. 2.4.1(2)\&(3)]%
{Prudnikov5} we have%
\begin{eqnarray*}
\mathcal{L}^{-1}\left[ \frac{1}{s}\cos \left( \frac{\alpha }{s}\right) %
\right] &=&\mathrm{ber}\left( 2\sqrt{\alpha t}\right) , \\
\mathcal{L}^{-1}\left[ \frac{1}{\sqrt{s}}\cos \left( \frac{\alpha }{s}%
\right) \right] &=&\frac{1}{\sqrt{\pi t}}\cosh \sqrt{2\alpha t}\cos \sqrt{%
2\alpha t},
\end{eqnarray*}%
we rewrite (\ref{f+g=Convolution})\ as%
\begin{eqnarray}
&&\mathrm{ber}\left( 2\sqrt{at}\right) +\mathrm{ber}\left( 2\sqrt{bt}\right)
\label{ber(a)+ber(b)} \\
&=&\frac{2}{\pi }\int_{0}^{t}\frac{\cosh \sqrt{\left( a+b\right) \left(
t-\tau \right) }\cos \sqrt{\left( a+b\right) \left( t-\tau \right) }}{\sqrt{%
\tau \left( t-\tau \right) }}  \notag \\
&&\qquad \cosh \sqrt{\left( a-b\right) \tau }\cos \sqrt{\left( a-b\right)
\tau }\,d\tau .  \notag
\end{eqnarray}%
Taking $a=b$ and performing the changes of variables $x=2\sqrt{at}$ and $%
\tau =z^{2}/\left( 4a\right) $, (\ref{ber(a)+ber(b)})\ becomes 
\begin{equation*}
\mathrm{ber}\left( x\right) =\frac{1}{\pi }\int_{0}^{x}\frac{\cosh \sqrt{%
\frac{x^{2}-z^{2}}{2}}\cos \sqrt{\frac{x^{2}-z^{2}}{2}}}{\sqrt{x^{2}-z^{2}}}%
dz.
\end{equation*}%
Finally, performing the change $z=x\sin \theta $, we arrive at (\ref%
{ber(x)_Int})\ with $\mathrm{sc\,}\theta =\cos \theta $. To arrive at (\ref%
{ber(x)_Int})\ with $\mathrm{sc\,}\theta =\sin \theta $ we have to take (\ref%
{f+g=Convolution2})\ instead of (\ref{f+g=Convolution}) in the above
reasoning. To obtain (\ref{bei(x)_Int}), we can depart from the
trigonometric identity 
\begin{equation*}
\sin A+\sin B=2\sin \left( \frac{A+B}{2}\right) \sin \left( \frac{A-B}{2}%
\right) ,
\end{equation*}%
and consider the following Laplace anti-transforms \cite[Eqn. 2.4.1(2)\&(3)]%
{Prudnikov5} 
\begin{eqnarray*}
\mathcal{L}^{-1}\left[ \frac{1}{s}\sin \left( \frac{\alpha }{s}\right) %
\right] &=&\mathrm{bei}\left( 2\sqrt{\alpha t}\right) , \\
\mathcal{L}^{-1}\left[ \frac{1}{\sqrt{s}}\sin \left( \frac{\alpha }{s}%
\right) \right] &=&\frac{1}{\sqrt{\pi t}}\sinh \sqrt{2\alpha t}\sin \sqrt{%
2\alpha t}.
\end{eqnarray*}
\end{proof}

\begin{remark}
It is worth noting that we can derive the integral representations (\ref%
{ber(x)_Int}) and (\ref{bei(x)_Int})\ from the integral representations
given in (\ref{ber_nu_Int}) and (\ref{bei_nu_Int}). For instance, taking $%
\nu =0$ and changing the argument $x\sqrt{2}\rightarrow x$ of the Kelvin
function in (\ref{ber(x)_Int}),\ we have%
\begin{equation}
\mathrm{ber}\left( x\right) =\frac{1}{\pi }\int_{0}^{\pi }\cos \left( \frac{%
x\sin t}{\sqrt{2}}\right) \cosh \left( \frac{x\sin t}{\sqrt{2}}\right) dt.
\label{ber_Sin}
\end{equation}%
Performing the change of variables $u=t-\pi /2$ and taking into account the
parity of the integrand, we arrive at%
\begin{equation}
\mathrm{ber}\left( x\right) =\frac{2}{\pi }\int_{0}^{\pi /2}\cos \left( 
\frac{x\cos t}{\sqrt{2}}\right) \cosh \left( \frac{x\cos t}{\sqrt{2}}\right)
dt,  \label{ber_Cos}
\end{equation}%
which is equivalent to (\ref{ber(x)_Int})\ with $\mathrm{sc\,}\theta =\cos
\theta $. Now, split (\ref{ber_Sin})\ as 
\begin{eqnarray}
\mathrm{ber}\left( x\right) &=&\frac{1}{\pi }\int_{0}^{\pi /2}\cos \left( 
\frac{x\sin t}{\sqrt{2}}\right) \cosh \left( \frac{x\sin t}{\sqrt{2}}\right)
dt  \notag \\
&&+\frac{1}{\pi }\int_{\pi /2}^{\pi }\cos \left( \frac{x\sin t}{\sqrt{2}}%
\right) \cosh \left( \frac{x\sin t}{\sqrt{2}}\right) dt,  \label{Int_2}
\end{eqnarray}%
and perform in (\ref{Int_2})\ the change of variables $u=t-\pi /2$, taking
into account (\ref{ber_Cos}), to arrive at%
\begin{equation*}
\mathrm{ber}\left( x\right) =\frac{1}{\pi }\int_{0}^{\pi /2}\cos \left( 
\frac{x\sin t}{\sqrt{2}}\right) \cosh \left( \frac{x\sin t}{\sqrt{2}}\right)
dt+\frac{1}{2}\mathrm{ber}\left( x\right) ,
\end{equation*}%
which is equivalent to (\ref{ber(x)_Int})\ with $\mathrm{sc\,}\theta =\sin
\theta $. For the integral representation of the $\mathrm{bei}$ function, we
can perform a similar derivation.
\end{remark}

\end{document}